\theoremstyle{remark}
\newtheorem{Def}{{\rm Definition}}
\newtheorem{Ex}{{\rm Example}}
\newtheorem{Rem}{{\rm Remark}}
\newtheorem{Cor}{Corollary}
\newtheorem{Fact}{Fact}
\newtheorem{Prop}{Proposition}
\newtheorem{Thm}{Theorem}
\newtheorem*{Prob*}{Problem}
\begin{document}
\setcounter{figure}{0}
\title[An elementary study of homology groups of Reeb spaces of fold maps]{An elementary study on realizable changes of homology groups of Reeb spaces of fold maps by fundamental surgery operations}
\author{Naoki Kitazawa}
\keywords{Singularities of differentiable maps; generic maps. Differential topology. Reeb spaces.}
\subjclass[2010]{Primary~57R45. Secondary~57N15.}
\address{Institute of Mathematics for Industry, Kyushu University, 744 Motooka, Nishi-ku Fukuoka 819-0395, Japan}
\email{n-kitazawa@imi.kyushu-u.ac.jp}
\maketitle
\begin{abstract}
In the singularity and differential topological theory of Morse functions and higher dimensional versions or {\it fold} maps and application to algebraic and differential topology of manifolds, constructing explicit fold maps and investigating their source manifolds is fundamental, important and difficult. The author has introduced surgery operations ({\it bubbling operations}) to fold maps, motivated by studies of Kobayashi, Saeki etc. since 1990 and has explicitly shown that homology groups of {\it Reeb spaces} of maps constructed by iterations of these operations are flexible in several cases. Such operations seem to be strong tools in construction of maps and precise studies of manifolds. 

More precisely, the author has also noticed that the resulting groups are represented as direct sums of the original homology groups and suitable finitely generated commutative groups. 

The {\it Reeb space} of a map is the space of all connected components of inverse images of the maps. Reeb spaces inherit fundamental invariants of the manifolds such as homology groups etc. much in simple cases as polyhedra whose dimensions are equal to those of the target spaces.

This paper is on a new explicit study of changes of homology groups of Reeb spaces of fold maps by the surgery operations. We present explicit changes obtained by an approach via elementary theory of sequences of numbers and fundamental continuous or differentiable functions.         

\end{abstract}

\section{Introduction}
\label{sec:1}
Throughout this paper, manifolds, maps between them, bundles whose fibers are manifolds etc. are fundamental geometric objects. They are smooth and of the class $C^{\infty}$ except cases where several differentiable functions on intervals are considered, where extra special explanations are done etc.. We give additional explanations on geometric objects and notions.

We call a bundle whose fiber is a topological space $X$ an {\it $X$-bundle}. A {\it linear} bundle whose fiber is a standard closed disc (an unit disc) or a standard sphere (an unit sphere) means a bundle whose
 structure group acts on the fiber linearly.

We also explain about homotopy spheres. A homotopy sphere is called an {\it exotic} sphere if it is not diffeomorphic to a standard sphere and an {\it almost-sphere} if it is obtained by gluing two copies of a standard closed sphere on the boundaries. Every homotopy sphere except $4$-dimensional exotic spheres, being undiscovered, is an almost-sphere.

Last, for a smooth map, we call the set of all {\it singular} points, defined as points such that at the points the ranks of the differentials drop, the {\it singular set} and the image of the singular set the {\it singular value set}. For the map, we call the value of a singular point the {\it singular value}, a point not in the singular value set a {\it regular value} and the complement of the singular value set the {\it regular value set}. 

Commutative groups and more generally, modules over principal ideal domains are fundamental and important algebraic objects in the present paper. For a finitely generated module $G$ over a principal ideal domain $R$, we denote the rank by ${\rm rank}_R G$.

\subsection{Historical backgrounds on studies of the present paper and fundamental explanations on fundamental tools}
\subsubsection{Fold maps}
{\it Fold} maps are higher dimensional versions of Morse functions and fundamental and important tools in the theory of Morse functions and higher dimensional versions and application to algebraic and differential topological
 theory of manifolds: in other words the global singularity theory.
\begin{Def}
\label{def:1}
Let $m \geq n$ be positive integers. A smooth map between an $m$-dimensional smooth manifold without boundary and an $n$-dimensional smooth manifold without boundary is said to be a {\it fold map} if at each singular point $p$, the form is
$$(x_1, \cdots, x_m) \mapsto (x_1,\cdots,x_{n-1},\sum_{k=n}^{m-i}{x_k}^2-\sum_{k=m-i+1}^{m}{x_k}^2)$$
 for an integer $0 \leq i(p) \leq \frac{m-n+1}{2}$.
\end{Def}

\begin{Prop}
\label{prop:1}
 For any singular point $p$ of the fold map in Definition \ref{def:1}, the $i(p)$ is unique {\rm (}$i(p)$ is called the {\rm index} of $p${\rm )}, the set of all singular points of a fixed index of the map is a closed smooth submanifold of dimension $n-1$ of the source manifold and the restriction map to the singular set is a smooth immersion of codimension $1$.
\end{Prop}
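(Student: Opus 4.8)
The plan is to carry out each assertion as a computation in the normal form coordinates of Definition \ref{def:1}, with a single invariant-theoretic ingredient needed to pin down the index. Write the map near a singular point $p$ as $(x_1,\cdots,x_m)\mapsto(x_1,\cdots,x_{n-1},Q_i(x))$, where $Q_i(x)=\sum_{k=n}^{m-i}{x_k}^2-\sum_{k=m-i+1}^{m}{x_k}^2$. First I would read off the Jacobian: its first $n-1$ rows are the constant standard basis covectors, while the last row is $(0,\cdots,0,\partial Q_i/\partial x_n,\cdots,\partial Q_i/\partial x_m)$. Hence the rank falls below $n$ exactly where $\partial Q_i/\partial x_k=0$ for every $k\geq n$, that is, on the coordinate plane $\{x_n=\cdots=x_m=0\}$, which has dimension $n-1$. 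Restricting the map to this plane gives $(x_1,\cdots,x_{n-1})\mapsto(x_1,\cdots,x_{n-1},0)$, the standard inclusion $\mathbb{R}^{n-1}\hookrightarrow\mathbb{R}^{n}$, whose differential is injective; thus the restriction is an immersion of codimension $1$.

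The one step that is not a bare coordinate computation is the uniqueness of $i(p)$, and here I would invoke the intrinsic Hessian. Because $\mathrm{rank}\,df_p=n-1$, the cokernel $\mathrm{coker}\,df_p$ is one-dimensional, and the second intrinsic derivative of $f$ furnishes a coordinate-free symmetric bilinear form $H_p\colon\ker df_p\times\ker df_p\to\mathrm{coker}\,df_p$. In the normal form $H_p$ is represented by $Q_i$ on the $(m-n+1)$-dimensional kernel, with $i$ negative and $m-n+1-i$ positive inertia indices. By Sylvester's law of inertia these two numbers do not depend on the chart; and since the constraint $0\leq i\leq\frac{m-n+1}{2}$ forces $i\leq m-n+1-i$, the integer $i(p)$ is the smaller of the two inertia indices, which is intrinsic and in particular unaffected by the sign ambiguity in identifying $\mathrm{coker}\,df_p$ with $\mathbb{R}$. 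This proves uniqueness.

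To globalize, note that within each normal-form chart every singular point lies on $\{x_n=\cdots=x_m=0\}$ and carries the same quadratic form $Q_i$, so $i(p)$ is constant there; hence $i$ is locally constant on the singular set. Consequently the set $S_i$ of index-$i$ singular points is open and closed in the full singular set $S(f)$. Since $S(f)$ is closed in the source manifold---its complement, the regular point set, being open by lower semicontinuity of the rank of $df$---each $S_i$ is closed in the source manifold, and by the local normal form it is a smooth $(n-1)$-dimensional submanifold on which $f$ restricts to an immersion of codimension $1$. The main obstacle throughout is the uniqueness of the index: it is the only assertion not reducible to the explicit chart, and its subtlety is the need to manufacture a coordinate-free Hessian and to read $i(p)$ off as an inertia index in spite of the one-dimensional cokernel being canonically identified with $\mathbb{R}$ only up to a positive scalar.
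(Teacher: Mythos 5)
Your proof is correct. The paper itself gives no proof of Proposition~1 --- it is stated as standard background, with the fundamentals deferred to the references on fold and stable maps (Golubitsky--Guillemin, Saeki) --- so there is no argument in the text to compare against; your write-up is essentially the standard one from those sources. The two points that genuinely need care are both handled properly: the well-definedness of $i(p)$ via the intrinsic second derivative $\ker df_p \times \ker df_p \to \mathrm{coker}\, df_p$ together with Sylvester's law and the normalization $i \leq \frac{m-n+1}{2}$ (which absorbs the sign ambiguity in identifying the one-dimensional cokernel with $\mathbb{R}$), and the globalization step showing $i$ is locally constant on $S(f)$ so that each index stratum is closed.
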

{\it Stable} maps are fundamental and important in various fields of the singularity theory. If we perturb a smooth map slightly or more precisely, we deform the map slightly in the space of smooth maps with the {\it Whitney $C^{\infty}$ topology} and the resulting map does not change from the original one modulo {\it $C^{\infty}$ equivalent} relations, which we explain later, then the map is said to be {\it stable}.
 A fold map is stable if and only if the restriction map has only normal crossings as self-crossings.
For fundamental stuffs and properties on fold maps and stable maps, see \cite{golubitskyguillemin} for example. For differential topological viewpoints of fold maps, see \cite{saeki} for example.
\subsubsection{Reeb spaces}
In studying fold maps, stable maps etc., {\it Reeb spaces} (\cite{reeb}) are fundamental and important. The {\it Reeb} space of a map $c:X \rightarrow Y$ is the space of all connected components of inverse images of $c$ and denoted by $W_c$. We denote the quotient map onto $W_c$ by $q_c:X \rightarrow W_c$. We can define the natural map uniquely $\bar{c}$ satisfying the relation $c=\bar{c} \circ q_c$.

 For example, the Reeb space is a graph if $c$ is a Morse function etc., a polyhedron of dimension equal to the dimension of the target manifold if it is a fold map and in considerable cases such as for stable maps, this holds (\cite{shiota}).

\subsubsection{Several explicit fold maps}
Stable Morse functions exist densely on any closed manifold. A closed manifold of dimension larger than $1$ admits a (stable) fold map into the plane if and only if the Euler number is even.
Existence of fold maps into general Euclidean spaces have been studied since Eliashberg's general
 studies \cite{eliashberg} \cite{eliashberg2} etc.. 

A fold map is said to be {\it special generic} if any singular point $p$ is of index $0$. For example, Morse functions with just $2$ singular points, characterizing spheres topologically (except $4$-dimensional exotic spheres), and canonical projections of unit spheres are stable and special generic: the singular value sets of the latter maps are embedded spheres. 

As an advanced result of \cite{saeki2}, every homotopy sphere of dimension $m>1$ not being an exotic $4$-dimensional sphere admits a special generic stable map into the plane whose singular value set is an embedded circle. 
As a more advanced result of the paper, a homotopy sphere of dimension $m>3$ admitting a special generic map into ${\mathbb{R}}^n$ ($n=m-3,m-2,m-1$) is diffeomorphic to $S^m$. In \cite{wrazidlo}, Wrazidlo has found more advanced restrictions on the differentiable structures of $7$ or higher dimensional homotopy spheres admitting special generic maps whose singular value sets are embedded spheres.

\begin{Fact}[\cite{saeki2}] 
\label{fact:1}
A manifold of dimension $m>1$ admits a special generic map into ${\mathbb{R}}^n$ satisfying the relation $m>n \geq 1$ if and only if $M$ is obtained by gluing the following two compact manifolds on the boundaries by a bundle isomorphism between
 the bundles whose fibers are $S^{m-n}$ defined on the boundaries in canonical ways. 
\begin{enumerate}
\item The total space of a linear $D^{m-n+1}$-bundle over the boundary of a compact $n$-dimensional manifold $P$ immersed into ${\mathbb{R}}^n$. 
\item The total space of a bundle over $P$ whose fiber is $S^{m-n}$.
\end{enumerate}
Thus, the Reeb space of a special generic map into an Euclidean space is a smooth compact manifold $P$, immersed into the target Euclidean space. 

\end{Fact}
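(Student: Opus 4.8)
The plan is to prove both implications by reading off the local normal form of an index-$0$ fold (a \emph{definite} fold) and patching the resulting local pictures into a global bundle decomposition. For the ``only if'' direction I would first invoke the normal form from Definition \ref{def:1} with $i(p)=0$: around each singular point $p$ there are coordinates in which $f$ is $(u,v) \mapsto (u,|v|^2)$ with $u \in \mathbb{R}^{n-1}$ and $v \in \mathbb{R}^{m-n+1}$. In this model the image is the half-space $\mathbb{R}^{n-1} \times [0,\infty)$, the inverse image of an interior point is a single round sphere $S^{m-n}$, and the inverse image of a boundary point is a single point. Reading this off fiberwise, I would show that $q_f$ identifies a tubular neighborhood of the singular set $S(f)$ with a linear $D^{m-n+1}$-bundle over the $(n-1)$-manifold $S(f)$, and that the Reeb space $W_f$ is a smooth compact $n$-manifold whose boundary is $q_f(S(f))$; the natural map $\bar{f}$ is an immersion because $f|_{S(f)}$ is an immersion by Proposition \ref{prop:1} and $f$ is a submersion off the singular set. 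Setting $P := W_f$, a closed collar of $\partial P$ has preimage the linear $D^{m-n+1}$-bundle of item (1), while the preimage of the complementary ``core'' (diffeomorphic to $P$) is the $S^{m-n}$-bundle of item (2); the two pieces meet along the common boundary sphere bundle over $\partial P$, which gives the asserted gluing.

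For the ``if'' direction I would run this construction in reverse. Given the two pieces glued by a bundle isomorphism of their boundary $S^{m-n}$-bundles, I would define $f = \bar{f} \circ q_f$ by collapsing each sphere fiber over $P$ and composing with the immersion $P \looparrowright \mathbb{R}^n$, while on the disc-bundle piece I would use the quadratic model $v \mapsto |v|^2$ in each linear fiber together with the collar coordinate of $\partial P$. It then remains to check that $f$ is smooth across the gluing and that its only singularities are index-$0$ folds along the zero-section of the disc bundle. The final sentence of the statement is then immediate, since by construction $P = W_f$ and $\bar{f}$ is the asserted immersion into the target Euclidean space.

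The step I expect to be the main obstacle is establishing that the disc bundle of item (1) is genuinely \emph{linear}, equivalently that the local $O(m-n+1)$-actions coming from the several quadratic normal forms fit together with structure group $O(m-n+1)$ rather than merely the diffeomorphism group of $D^{m-n+1}$. This requires combining the uniqueness of the index from Proposition \ref{prop:1} with a tubular-neighborhood argument making the fiberwise radius function $|v|^2$ globally well defined up to the orthogonal action; verifying that the resulting transition functions are smooth and orthogonal, and that the boundary identification is a bundle (not merely a diffeomorphic) isomorphism, is where the bulk of the technical care lies. A second delicate point is proving that the quotient $W_f$ is a \emph{smooth} manifold rather than only a topological one, which I would handle by transporting the half-space charts of the normal form through $q_f$.
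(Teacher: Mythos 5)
The paper offers no proof of this statement: it is quoted as a Fact from \cite{saeki2}, so there is nothing internal to compare against. Your outline is essentially the standard argument of that reference (definite-fold normal form, smooth manifold structure on $W_f$ with $\partial W_f=q_f(S(f))$ and $\bar f$ an immersion, collar/core decomposition, reverse construction for the converse), and you correctly flag the two genuinely delicate points, namely the linearity of the disc bundle and the smoothness of $W_f$.
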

%
%
%

Special generic maps are easy to handle and interesting objects from the viewpoint of the global singularity theory and differential topology.
However, the class of manifolds admitting special generic maps seem to be not so wide. In fact, several theorems on classifications of manifolds admitting special generic maps into fixed dimensional Euclidean spaces show this.
For example, manifolds whose dimensions are larger than $2$ represented as connected sums of total spaces of bundles whose fibers are almost-spheres over the circle are characterized as manifolds admitting special generic maps into the plane (\cite{saeki2}, \cite{saeki3} etc.). As a more general explicit case, closed and connected manifolds whose fundamental groups are free and whose dimensions are $m>3$ admitting special generic maps into ${\mathbb{R}}^3$ are characterized as manifolds represented as connected sums of total spaces of bundles whose fibers are standard spheres over the circle or $S^2$ under an appropriate assumption on $m$: $m=4,5,6$ for example (\cite{saeki3}, \cite{saekisakuma}, \cite{saekisakuma2} etc.). 
 
It is fundamental and important to construct various explicit fold maps other than these maps and obtain and investigate manifolds admitting such explicit maps. 
Thus fold maps satisfying weaker conditions and classes of fold maps giving manifolds of wider classes the source manifolds are important. As a simplest class, we introduce {\it round} fold maps introduced by the author based on the stream in \cite{kitazawa}, \cite{kitazawa2} and \cite{kitazawa3}.
 
Before the introduction, we define an equivalence relation on the family of smooth maps. Two smooth maps $c_1:X_1 \rightarrow Y_1$ and $c_2:X_2 \rightarrow Y_2$ are said to be {\it $C^{\infty}$ equivalent} if
a pair $({\phi}_X,{\phi}_Y)$ of diffeomorphisms satisfying the relation ${\phi}_Y \circ c_1=c_2 \circ {\phi}_X$ exists. We also say that $c_1$ is {\it $C^{\infty}$ equivalent to} $c_2$

\begin{Def}[\cite{kitazawa2}, \cite{kitazawa3}, \cite{kitazawa4} etc.]
\label{def:2}
$f:M \rightarrow {\mathbb{R}}^n$ is said to be a {\it round} fold map if either of the following hold.
\begin{enumerate}
\item $n=1$ holds and then $f$ is $C^{\infty}$ equivalent to
 a stable Morse function $f_0:M_0 \rightarrow {\mathbb{R}}$ on a closed manifold $M_0$ such that the following three hold.
\begin{enumerate}
\item $0$ is a regular value of $f_0$.  
\item Two Morse functions defined as ${f_0} {\mid}_{{f_0}^{-1}(-\infty,0]}$ and ${f_0} {\mid}_{{f_0}^{-1}[0,+\infty)}$ are $C^{\infty}$ equivalent.
\item $f_0(S(f_0))$ is the set of all integers whose absolute values are positive and not larger than a positive integer.  
\end{enumerate}
\item $n \geq 2$ holds and $f$ is $C^{\infty}$ equivalent to
 a fold map $f_0:M_0 \rightarrow {\mathbb{R}}^n$ on a closed manifold $M_0$ such that the following three hold.

\begin{enumerate}
\item The singular set $S(f_0)$ is a disjoint union of standard spheres whose dimensions are $n-1$ and consists of $l >0$ connected components.
\item The restriction map $f_0 {\mid}_{S(f_0)}$ is an embedding.
\item Let ${D^n}_r:=\{(x_1,\cdots,x_n) \in {\mathbb{R}}^n \mid {\sum}_{k=1}^{n}{x_k}^2 \leq r \}$. Then, the relation $f_0(S(f_0))={\sqcup}_{k=1}^{l} \partial {D^n}_k$ holds.  
\end{enumerate}
\end{enumerate}
\end{Def}

Stable Morse functions with just $2$ singular points and stable special generic maps on homotopy spheres before are simplest examples of round fold maps.

\begin{Ex}
\label{ex:1}
 (\cite{kitazawa},\cite{kitazawa2},\cite{kitazawa4} etc.)
Let $m>n$ be positive integers. Let $M$ be an $m$-dimensional closed and connected manifold and $\Sigma$ be an ($m-n$)-dimensional almost-sphere. Then the following are equivalent.
\begin{enumerate}
\item $M$ is the total space of a bundle over $S^n$ whose fiber is $\Sigma$.
\item 
Either of the following holds.
\begin{enumerate}
\item $n=1$ and $M$ admits a round fold map with just four singular points. The inverse images of regular values in the five connected components of the regular value set are $\emptyset$, $S^{m-n}$, $\Sigma \sqcup \Sigma$, $S^{m-n}$ and $\emptyset$, respectively.
\item $n \geq 2$ and $M$ admits a round fold map such that the singular set is the disjoint union of two copies of $S^{n-1}$. The inverse images of regular values in the three connected components of the regular value set are $\emptyset$, $S^{m-n}$ and $\Sigma \sqcup \Sigma$, respectively. Furthermore, on the complement of
the interior of an $n$-dimensional standard closed disc smoothly embedded in the connected component of the center of the regular value set and its inverse image, the map is $C^{\infty}$ equivalent to a product of a Morse function with just two singular points on the cylinder $\Sigma \times [-1,1]$ such that the boundary coincides with the inverse image of the minimum and the identity map ${\rm id}_{S^{n-1}}$.
\end{enumerate}
\end{enumerate}

Moreover, the Reeb space is obtained by attaching an $n$-dimensional closed disc to an ($n-1$)-dimensional standard sphere smoothly embedded in the interior of another $n$-dimensional closed disc on the boundary. Thus this polyhedron is simple homotopy equivalent to a bouquet of two copies of $S^n$.
\end{Ex}

For example, we can obtain several fold maps on $7$-dimensional exotic homotopy spheres constructed by Milnor in \cite{milnor} into ${\mathbb{R}}^4$. We cannot obtain a special generic map into ${\mathbb{R}}^n$ ($n=1,2,3,4,5,6$) on any $7$-dimensional exotic homotopy sphere. 

\subsection{Main stuffs and the content of this paper}
\subsubsection{Surgery operation to construct explicit fold maps more}

To obtain maps other than the presented fundamental maps explicitly and systematically, the author has introduced normal bubbling operations as surgery operations to stable fold maps in \cite{kitazawa5}. We remove the interior of a small closed tubular neighborhood of a closed and connected and orientable submanifold in the regular value of an original fold map and a connected component of its inverse image and after that, we attach a new map such that the singular value set is regarded as the boundary of the closed tubular neighborhood and in the interior of the target space and that the map obtained by the restriction to the singular set is an embedding. In this way we obtain a new stable fold map.
These operations are generalizations of {\it bubbling surgeries} by Kobayashi \cite{kobayashi3}: Kobayashi`s surgery is the case where the manifold is a point. 

\begin{Ex}
\label{ex:2}
The map of Example \ref{ex:1} is obtained by a bubbling surgery to a stable special generic map whose singular value set is a standard sphere.
\end{Ex}

\subsubsection{Homological properties of maps obtained by surgery operations and the content of this paper}
The author has constructed maps by finite iterations of such surgery operations through the following problem based on fundamental observations.
   
\begin{Prob*}
Let $R$ be a principal ideal domain and for a fold map $f:M \rightarrow N$ from a closed and connected manifold of dimension $m$ into an manifold without boundary of dimension $n$ satisfying the relation $m>n \geq 1$, let $f^{\prime}:{M}^{\prime} \rightarrow N$ be a fold map obtained by a finite iteration of normal bubbling operations to $f$. Then for any integer $0 \leq j \leq n$, we can set a finitely generated module $G_j$ over $R$ so
  that $G_0$ is a trivial $R$-module and that $G_n$ is not
 a trivial $R$-module and the module $H_j(W_{{f}^{\prime}};R)$ is isomorphic to $H_j(W_f;R) \oplus G_j$. 

 Conversely, for a suitable family $\{G_j\}_{j=0}^{n}$ of modules, can we construct
 a suitable map $f^{\prime}$ satisfying the condition by a finite iteration of normal bubbling operations starting from $f$?
\end{Prob*}
Example \ref{ex:1} or \ref{ex:2} accounts for the case where $G_j$ is zero for $1 \leq j \leq n-1$ and isomorphic to $\mathbb{Z}$ for $j=n$. In \cite{kitazawa5}, we have shown that we can realize the modules flexibly by explicit construction. Explicit related results will be presented in Propositions \ref{prop:2} and \ref{prop:3}. In \cite{kitazawa6}, we show more explicit restrictions: more precisely, we have investigated cases where the numbers of non-trivial $G_j$ are small. In addition, in the process, we have found new sufficient conditions to
 obtain $f^{\prime}$ from the original map $f$ we could not find in \cite{kitazawa5}.

In this paper, as a related study and an advanced version of the presented study of \cite{kitazawa6}, we present
 new general sufficient conditions on the groups $G_j$ via new methods: we apply elementary discussions on sequences of numbers and fundamental continuous or differentiable functions.

\subsubsection{The content of the paper}
The content of the paper is as the following. 
In the next section, we introduce and review {\it normal bubbling operations} based on \cite{kitazawa5}. The following
 will be also introduced as a key ingredient to know precise information of the source manifolds from the Reeb spaces.
For a stable fold map such that inverse images of regular values are disjoint unions of almost-spheres satisfying appropriate differential topological conditions, homology groups and homotopy groups of the source manifold and the Reeb space whose degrees are smaller than the difference of the dimensions of the source and the target manifolds are isomorphic. Special generic maps, maps presented in Example \ref{ex:1}, Fact \ref{fact:2} in the next section etc. satisfy the assumption of this fact. We can know homology and homotopy groups of the source manifolds from the Reeb spaces in these simple cases.

The last section is for main results. We present the new sufficient conditions for the groups $G_j$. Most of them will be obtained via elementary discussions on sequences of numbers and fundamental continuous or differentiable functions, which are new methods.

\subsection{A short remark on the content and acknowledgement}
\thanks{Closely related to the present paper, the author has presented preprints \cite{kitazawa5}, \cite{kitazawa6} etc.. However, we can read this paper 
even if we do not know the contents of them so much. Related stuffs we need will be presented in this paper.
 
The author is a member of and supported by the project Grant-in-Aid for Scientific Research (S) (17H06128 Principal Investigator: Osamu Saeki)
"Innovative research of geometric topology and singularities of differentiable mappings"

( 
https://kaken.nii.ac.jp/en/grant/KAKENHI-PROJECT-17H06128/
).}

\section{Normal bubbling operations}
\label{sec:2}
\begin{Def}[\cite{kitazawa5}]
\label{def:1}
Let $f:M \rightarrow N$ be a fold map from a closed and connected manifold of dimension $m$ into an manifold without boundary of dimension $n$ satisfying the relation $m>n \geq 1$, let $O$ be a connected component of the regular
 value set $N-f(S(f))$. Let $S$ be a connected and orientable closed submanifold of
 $O$ and $N(S)$, ${N(S)}_i$ and ${N(S)}_o$ be small closed tubular neighborhoods
 of $S$ in $O$ such that the relations ${N(S)}_i \subset N(S) \subset {N(S)}_o$ and ${N(S)}_i \subset {\rm Int }N(S)$ and $N(S) \subset {\rm Int} {N(S)}_o$ hold and they have sections seen as normal bundles over $S$: for example they are in an open set realized as an open submanifold of ${\mathbb{R}}^n$ ($N={\mathbb{R}}^n$ case for example: note that this condition is not included in the preprints \cite{kitazawa5} or \cite{kitazawa6} of the author due to my carelessness). Let
 $f^{-1}({N(S)}_o)$ have a connected component $P$ such that $f {\mid}_{P}$
 makes $P$ a bundle over ${N(S)}_o$.

Furthermore we assume that there exist an $m$-dimensional closed manifold $M^{\prime}$ and
 a fold map $f^{\prime}:M^{\prime} \rightarrow {\mathbb{R}}^n$
 satisfying the following.
\begin{enumerate}
\item $M-{\rm Int} P$ is a compact submanifold (with non-empty boundary) of $M^{\prime}$ of dimension $m$.
\item $f {\mid}_{M-{\rm Int} P}={f}^{\prime} {\mid}_{M-{\rm Int} P}$ holds.
\item ${f}^{\prime}(S({f}^{\prime}))$ is the disjoint union of $f(S(f))$ and $\partial N(S)$.
\item $(M^{\prime}-(M-P)) \bigcap {{f}^{\prime}}^{-1}({N(S)}_i)$ is empty or ${{f}^{\prime}} {\mid}_{(M^{\prime}-(M-P)) \bigcap {{f}^{\prime}}^{-1}({N(S)}_i)}$ makes $(M^{\prime}-(M-P)) \bigcap {f^{\prime}}^{-1}({N(S)}_i)$ a bundle over ${N(S)}_i$.
\end{enumerate}
 These assumptions allow us to consider the procedure of constructing $f^{\prime}$ from $f$. We
 call it a {\it normal bubbling operation} to $f$ and ${\bar{f}}^{-1}(S) \bigcap q_f(P)$, which is homeomorphic
 to $S$, the {\it generating manifold} of the normal bubbling operation. 
 
\begin{enumerate}
\item Let us suppose the following additional conditions.
\begin{enumerate}
\item
 ${{f}^{\prime}} {\mid}_{(M^{\prime}-(M-P)) \bigcap {f^{\prime}}^{-1}({N(S)}_i)}$ makes $(M^{\prime}-(M-P)) \bigcap {f^{\prime}}^{-1}({N(S)}_i)$ the disjoint union of two bundles over ${N(S)}_i$, then the procedure is called a {\it normal M-bubbling operation} to $f$. Note that the original inverse image having no singular points is represented as a connected sum of new two manifolds appearing as fibers of the two bundles.
\item ${{f}^{\prime}} {\mid}_{(M^{\prime}-(M-P)) \bigcap {f^{\prime}}^{-1}({N(S)}_i)}$ makes $(M^{\prime}-(M-P)) \bigcap {f^{\prime}}^{-1}({N(S)}_i)$ the disjoint union of two bundles over ${N(S)}_i$ and the fiber of one of the bundles is an almost-sphere, then the procedure is called a {\it normal S-bubbling operation} to $f$. Note that this operation is also a normal M-bubbling operation.
\end{enumerate}
\item As extra assumptions, if the following two hold, then the procedure is called a {\it trivial normal bubbling operation}. 
\begin{enumerate}
\item The map ${f^{\prime}} {\mid}_{{f^{\prime}}^{-1}({N(S)}_o-{\rm Int} {N(S)}_i)}$ is $C^{\infty}$ equivalent to the product of a Morse function with just one singular point and the identity
 map ${\rm id} {\mid}_{\partial N(S)}$.
\item The map ${f^{\prime}} {\mid}_{{{f}^{\prime}}^{-1}({N(S)}_i)}$ makes $(M^{\prime}-(M-P)) \bigcap {f^{\prime}}^{-1}({N(S)}_i)$ a trivial bundle over ${N(S)}_i$.
\end{enumerate}
\end{enumerate}
\end{Def}
\begin{Fact}[\cite{kitazawa2}, \cite{kitazawa4} etc.]
\label{fact:2}
Let $m$ and $n$ be positive integers satisfying the relation $m \geq 2n$ and let $M$ be an $m$-dimensional closed and connected manifold. Then the following are equivalent.
\begin{enumerate}
\item $M$ is represented as a connected sum of $l>0$ manifolds regarded as the total spaces bundles whose fibers are $S^{m-n}$ over $S^n$.
\item $M$ admits a round fold map obtained by $l$-times trivial normal S-bubbling operations starting from a canonical projection of a unit sphere, more generally, a special generic map on a homotopy sphere into the plane whose singular value set is an embedded circle or a higher dimensional version, such that inverse images of regular values are disjoint unions of standard spheres.
\end{enumerate}


\end{Fact}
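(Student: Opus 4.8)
The plan is to prove the two implications separately, reducing both to a single \emph{building block} lemma that describes the effect of one trivial normal S-bubbling operation, and then to induct on $l$; throughout I would use Fact~\ref{fact:1} to control the starting special generic map and Example~\ref{ex:1} (together with Example~\ref{ex:2}) for the case $l=1$. First I would pin down the $l=0$ picture. By Fact~\ref{fact:1}, a special generic map on a homotopy sphere $\Sigma^m$ into $\mathbb{R}^n$ with embedded $(n-1)$-sphere singular value set has Reeb space an $n$-disc $P=D^n$, and $\Sigma^m$ is obtained by gluing a linear $D^{m-n+1}$-bundle over $\partial D^n$ to an $S^{m-n}$-bundle over $D^n$. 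Since $D^n$ is contractible both bundles are trivial, so the inverse image over every regular value is the standard sphere $S^{m-n}$; this is the required base case, with $\Sigma^m$ playing the role of the connected-sum unit.

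The crux is the following building block lemma: one trivial normal S-bubbling operation performed at a point $p$ lying in a regular region over which the inverse image is $S^{m-n}$ changes the source manifold by a connected sum with the total space of a single linear $S^{m-n}$-bundle over $S^n$, and conversely every such bundle arises from a suitable choice of bubbling data. To establish it I would argue as follows. Because the generating manifold is a point, the component $P$ over the outer neighborhood ${N(S)}_o\cong D^n$ is a trivial piece $S^{m-n}\times D^n$, and the operation replaces it by the piece $Q=M'-(M-P)$. The defining conditions of a trivial normal S-bubbling force a product structure $C^{\infty}$-equivalent to (a Morse function with one singular point)$\times\,\mathrm{id}_{S^{n-1}}$ over the annulus ${N(S)}_o\setminus\mathrm{Int}\,{N(S)}_i$ and a trivial bundle over the inner disc ${N(S)}_i$; crucially, these conditions constrain each region \emph{separately} but leave free the relative gluing of the newly born round-sphere component over ${N(S)}_i$ to its product model over the annulus. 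That relative gluing is precisely a linear clutching function $S^{n-1}\to O(m-n+1)$, so decomposing the base as $S^n={N(S)}_i\cup(\text{annulus})$ exhibits the modified local piece as an arbitrary linear $S^{m-n}$-bundle over $S^n$ with a standard ball removed, i.e.\ as a connected summand. Matching this with the handle decomposition $S^m=(S^{m-n}\times D^n)\cup(D^{m-n+1}\times S^{n-1})$ identifies the global effect as the asserted connected sum; the $l=1$ instance is exactly Example~\ref{ex:1} with $\Sigma=S^{m-n}$.

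With the lemma in hand, the implication $(2)\Rightarrow(1)$ follows by induction on $l$. Each of the $l$ operations is applied at a point of a region carrying a standard regular fiber, and such regions persist away from earlier bubbling sites because $f'=f$ off the modified piece; hence after $l$ steps the source is $\Sigma^m\# N_1\#\cdots\# N_l$ with each $N_i$ a linear $S^{m-n}$-bundle over $S^n$. Since $m\ge 2n$ forces $m\ge 5$ when $n\ge 3$, the homotopy-sphere factor is either standard or may be carried by one summand, yielding the stated connected-sum form. For $(1)\Rightarrow(2)$ I would start from the canonical projection of $S^m$ and realize each prescribed summand $N_i$ by one trivial normal S-bubbling whose clutching function represents the class of $N_i$ in $\pi_{n-1}(O(m-n+1))$; choosing the successive bubbling sites in nested neighborhoods makes the singular value set a family of concentric spheres, so the resulting map is $C^{\infty}$-equivalent to a round fold map whose inverse images of regular values are disjoint unions of standard spheres.

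The main obstacle is the two-way bundle bookkeeping inside the building block lemma, and this is exactly where the hypothesis $m\ge 2n$ enters: it is equivalent to $m-n+1\ge n+1$, which places $\pi_{n-1}(O(m-n+1))$ in the stable range, where $\pi_{n-1}(O(k))$ is independent of $k$. In this range the linear $S^{m-n}$-bundles over $S^n$ are classified cleanly, their clutching classes add under connected sum, and the high connectivity of the fiber relative to the base guarantees both that every class is realized by a bubbling and that distinct summands do not interfere, so the correspondence between operations and summands is a bijection. The remaining technical points I expect to be routine but must be checked: that the localized surgery genuinely produces a connected sum (the handle/bundle decomposition matching above), and that any exotic-sphere ambiguity coming from the homotopy-sphere start is harmless.
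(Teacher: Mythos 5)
First, a point of comparison: the paper does not prove this statement --- it is imported as a ``Fact'' from \cite{kitazawa2} and \cite{kitazawa4}, and the only internal support is the remark that it extends Example \ref{ex:1} with $\Sigma=S^{m-n}$. So your proposal has to be judged on its own merits. Its skeleton --- base case from Fact \ref{fact:1}, a building-block lemma for a single trivial normal S-bubbling, induction on $l$ --- is the right shape and is consistent with Examples \ref{ex:1}--\ref{ex:3}. You also correctly spot the one genuinely non-obvious point, namely that a ``trivial'' bubbling can still produce a non-trivial sphere bundle because the two locally-trivialized pieces are only identified up to a clutching map along $\partial N(S)_i$.

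The genuine gap is that the heart of the building-block lemma is asserted rather than proved, and the role you assign to $m\geq 2n$ does not hold up. (i) You claim the clutching function is a map $S^{n-1}\to O(m-n+1)$, but nothing in Definition \ref{def:1} or in statement (1) of the Fact restricts to linear bundles; the gluing is an arbitrary bundle isomorphism, so the relevant group is $\pi_{n-1}(\mathrm{Diff}(S^{m-n}))$, and your stable-range computation for $\pi_{n-1}(O(k))$ is beside the point. Realizing every clutching class by a bubbling needs no stability at all, since the identification is free data. (ii) The step that actually needs $m\geq 2n$ --- showing that replacing the local piece $S^{m-n}\times D^n$ by the bubbled piece, and doing so $l$ times in the nested fashion forced by roundness (each new operation necessarily sits \emph{inside} the innermost region of the previous map, not ``away from earlier bubbling sites''), assembles into a genuine connected sum $N_1\#\cdots\# N_l$ --- is exactly the step you defer as ``routine.'' It is not: one must produce disjoint separating $(m-1)$-spheres, and this is where the codimension condition $m-n\geq n$ plausibly enters (e.g., via Proposition \ref{prop:4} and handle/sphere isotopies), not via stability of the orthogonal groups. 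Two smaller slips: the $D^{m-n+1}$-bundle in Fact \ref{fact:1} lives over $\partial P=S^{n-1}$, not over the contractible $D^n$, so its triviality is not automatic; and the absorption of a possibly exotic starting sphere $\Sigma^m$ into one of the bundle summands is asserted without argument.
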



Note that this can be regarded as an extension of Example \ref{ex:1} where the relation $m \geq 2n$ holds with $\Sigma=S^{m-n}$.The Reeb space is simple homotopy equivalent to a bouquet of $l$ copies of $S^n$.


In this paper, we only need the following trivial normal M-bubbling operations essentially as in \cite{kitazawa5}, \cite{kitazawa6} etc..  

\begin{Ex}
\label{ex:3}
Let $f:M \rightarrow N$ be a fold map from a closed and connected manifold of dimension $m$ into an manifold without boundary of dimension $n$ satisfying the relation $m>n \geq 1$, let $O$ be a connected component of the regular
 value set $N-f(S(f))$. Let $S$ be a connected and orientable closed submanifold of an open disc in $O$ such that there exists a connected component $P$ of $f^{-1}(S)$ and $f {\mid}_{P}$ gives a trivial bundle. Then we can consider a trivial normal M-bubbling operation whose generating manifold is $q_f(P)$ so that the pair of the resulting new two connected components of an inverse image having no singular points or the pair of the fibers of the resulting two bundles explained in Definition \ref{def:1} can be any pair of manifolds by a connected sum of which we obtain the original connected component of an inverse image having no singular points. Note that a trivial S-bubbling operation is a specific case.   
\end{Ex}

We present several results of \cite{kitazawa5} with sketches of proofs. 

\begin{Prop}[\cite{kitazawa5}]  
\label{prop:2}
Let $f:M \rightarrow N$ be a fold map from a closed and connected manifold of dimension $m$ into an manifold without boundary of dimension $n$ satisfying the relation $m>n \geq 1$, let $O$ be a connected component of the regular
 value set $N-f(S(f))$. Let $f^{\prime}:{M}^{\prime} \rightarrow N$ be a fold map obtained by a normal M-bubbling operation to $f$ and $S$ be the generating manifold of the normal M-bubbling
 operation satisfying $\bar{f}(S) \subset O$.  
Then, for any principal ideal domain $R$ and any integer $0\leq i<n$, we have

$$H_{i}(W_{{f}^{\prime}};R) \cong H_{i}(W_f;R) \oplus H_{i-(n-{dim} S)}(S;R)$$

and we also have $H_{n}(W_{{f}^{\prime}};R) \cong H_{n}(W_f;R) \oplus R$.
\end{Prop}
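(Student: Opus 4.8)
The plan is to show that, up to the part of the Reeb spaces lying over the tubular neighbourhood, $W_{f'}$ is obtained from $W_f$ by attaching a single disk-bundle along its boundary sphere-bundle, and then to read off the change in homology from the Thom isomorphism together with a geometric retraction. Throughout write $c:=n-\dim S$ for the codimension, so that $N(S)_o$ is the total space of a linear $D^{c}$-bundle over $S$, namely the disk-bundle $D(\nu)$ of the normal bundle $\nu$ of $S$ in $O$, and $\partial N(S)$ is the associated sphere-bundle, which I denote $E=S(\nu)$.

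First I would pin down the local structure of the two Reeb spaces over $N(S)_o$. For $f$ there is the single connected inverse-image piece $P$ with $f|_P$ a bundle with connected fibre $F$, so $q_f(P)\cong N(S)_o=D(\nu)$; decomposing the disk-bundle by radius I write $q_f(P)=A\cup_E B_1$, where $A$ is the annular region over $N(S)_o\setminus\mathrm{Int}\,N(S)$ and $B_1$ is the sub-disk-bundle over $N(S)$ with $\partial B_1=E$. Hence $W_f=V\cup A\cup B_1$, where $V$ is the (common) part of the Reeb space lying outside $N(S)_o$. For $f'$ the singular value set acquires the new component $E=\partial N(S)$: over the annular region there is still one sheet with fibre $F$ (collapsing again to $A$), while over $N(S)$ the inverse image is two sheets with fibres $F_1,F_2$, $F=F_1\#F_2$ (collapsing to two disk-bundles $B_1,B_2\cong D(\nu)$), and the new fold contributes a single copy of $E$ to which all three pieces are glued. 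This yields the description
$$ W_{f'}=W_f\cup_E B_2, $$
with $B_2\cong D(\nu)$ attached to $W_f$ along $E=\partial B_2$, and with $E$ bounding the disk-bundle $B_1$ inside $W_f$. Verifying this identification carefully---that the local Reeb space at the new fold is exactly this ``tripod-bundle'' over $E$---is the step I expect to be the main obstacle, since it is the only place where the definition of the M-bubbling is really used.

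Granting the description, the computation is formal. By excision the pair $(W_{f'},W_f)$ has the homology of $(B_2,E)=(D(\nu),S(\nu))$, so $H_i(W_{f'},W_f;R)\cong\tilde H_i(\mathrm{Th}\,\nu;R)$. Because $S$ is orientable and $N(S)_o$ lies in a chart modelled on an open subset of $\mathbb{R}^n$, the bundle $\nu$ is $R$-orientable, so the Thom isomorphism gives $\tilde H_i(\mathrm{Th}\,\nu;R)\cong H_{i-c}(S;R)=H_{i-(n-\dim S)}(S;R)$. The long exact sequence of the pair therefore reads
$$ \cdots\to H_i(W_f;R)\to H_i(W_{f'};R)\to H_{i-(n-\dim S)}(S;R)\xrightarrow{\partial}H_{i-1}(W_f;R)\to\cdots. $$
To split it I would exhibit a retraction $r:W_{f'}\to W_f$: since $B_1$ and $B_2$ are isomorphic $D^{c}$-bundles over $S$ glued to $W_f$ along the same $E$, the map equal to the identity on $W_f$ and to a fibrewise bundle isomorphism $B_2\xrightarrow{\cong}B_1$ that is the identity on $E$ is well defined and continuous. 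Then $r_*$ splits the injection $H_i(W_f;R)\to H_i(W_{f'};R)$, the maps $\partial$ vanish, and the sequence breaks into
$$ H_i(W_{f'};R)\cong H_i(W_f;R)\oplus H_{i-(n-\dim S)}(S;R). $$

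It remains to read off the two cases. For $0\le i<n$ this is already the assertion. For $i=n$ the extra summand is $H_{\dim S}(S;R)$, and since $S$ is a closed connected orientable manifold of dimension $\dim S$ this equals $R$, giving $H_n(W_{f'};R)\cong H_n(W_f;R)\oplus R$. Finally, as $W_{f'}$ is an $n$-dimensional polyhedron all homology in degrees exceeding $n$ vanishes, so no higher terms interfere and the proof is complete.
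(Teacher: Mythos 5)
Your proof is correct and follows essentially the same route as the paper: both identify $W_{f'}$ as $W_f$ with a second copy of the disk-bundle $N(S)\cong D(\nu)$ attached along the boundary sphere-bundle $\partial N(S)$ (the paper phrases this as gluing in the double, a linear $S^{n-\dim S}$-bundle over $S$, along a hemisphere sub-disk-bundle identified with the tubular neighborhood). Your excision/Thom-isomorphism step together with the retraction $B_2\to B_1$ simply makes explicit the homological bookkeeping that the paper compresses into the remark that the attached sphere-bundle may be treated as a product and the phrase ``by seeing the topologies of $W_f$ and $W_{f'}$''.
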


\begin{proof}[Sketch of the proof]
We can take a small closed tubular neighborhood, regarded as
 the total space of a linear $D^{n-\dim S}$-bundle over $S$. $W_{f^{\prime}}$ is regarded as a polyhedron obtained by attaching a manifold represented as the total space of a linear $S^{n-\dim S}$-bundle over $S$ by considering $D^{n-\dim S}$ in the beginning of
 this proof as a hemisphere of $S^{n-\dim S}$ and identifying the subspace obtained by restricting the
 space to fibers $D^{m-\dim S}$ with the original regular neighborhood. Note that the total space of the linear $S^{n-\dim S}$-bundle
 over $S$ is regard as a product bundle in knowing only the homology group of $W_{f^{\prime}}$. In fact, the bundle admits a section, corresponding
 to the submanifold $S$ and regarded as the image of the section obtained by taking the origin
 in each fiber $D^{n-\dim S_j} \subset S^{n-\dim S_j}$. 

By seeing the topologies of $W_f$ and $W_{f^{\prime}}$, we have the result.
\end{proof}

\begin{Cor}[\cite{kitazawa5}]
\label{cor:1}
In Problem in the introduction, The groups $G_{n-1}$ and $G_n$ must be free. 
\end{Cor}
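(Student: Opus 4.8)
The plan is to reduce everything to a single bubbling step and then to telescope, so that the only genuine content is a freeness computation for the generating manifold. Write the iteration producing $f^{\prime}$ from $f$ as a chain $f = f_0, f_1, \dots, f_k = f^{\prime}$ of normal M-bubbling operations (the operations for which Proposition \ref{prop:2} records the change of homology), with generating manifold $S_t$ at the $t$-th step. Applying Proposition \ref{prop:2} at each step — which is legitimate provided each $\bar{f}_{t-1}(S_t)$ lies in a single regular-value component of $f_{t-1}$ — and telescoping the resulting direct-sum decompositions, I get $G_n \cong R^{k}$ and $G_{n-1} \cong \bigoplus_{t=1}^{k} H_{\dim S_t - 1}(S_t;R)$. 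Since a finite direct sum of free $R$-modules is free, and since $G_j$ is determined up to isomorphism by cancellation of finitely generated modules over a PID, it suffices to show that every summand appearing here is free.

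For $G_n$ this is immediate: it is a direct sum of copies of $R$. One can even bypass the bubbling structure here, since $W_{f^{\prime}}$ is an $n$-dimensional polyhedron, so its top cellular chain group $C_n$ is free and $H_n(W_{f^{\prime}};R) = \ker \partial_n$ is a submodule of a free module over a PID, hence free; $G_n$, being a direct summand of it, is then free as well.

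The real work is $G_{n-1}$, where the key step is to prove that $H_{\dim S - 1}(S;R)$ is free for the generating manifold $S$. I would invoke that $S$ is, by Definition \ref{def:1}, a connected, orientable, closed manifold; set $d = \dim S$. Integral orientability gives $R$-orientability for every PID $R$, so Poincar\'e duality yields $H_{d-1}(S;R) \cong H^{1}(S;R)$. The universal coefficient theorem for cohomology then gives the exact sequence $0 \to \operatorname{Ext}^1_R(H_0(S;R),R) \to H^1(S;R) \to \operatorname{Hom}_R(H_1(S;R),R) \to 0$; since $S$ is connected, $H_0(S;R) \cong R$ is free, the $\operatorname{Ext}$-term vanishes, and $H^1(S;R) \cong \operatorname{Hom}_R(H_1(S;R),R)$. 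Finally, $\operatorname{Hom}_R(-,R)$ applied to a finitely generated module over a PID is torsion-free and finitely generated, hence free. Thus each $H_{\dim S_t-1}(S_t;R)$ is free, and so is $G_{n-1}$.

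I expect the main obstacle to be precisely this freeness of the $H_{n-1}$-contribution, as it is the only place where torsion could a priori appear: the $(n-1)$-st homology of a general $n$-dimensional polyhedron need not be free (witness $H_1(\mathbb{RP}^2;\mathbb{Z}) \cong \mathbb{Z}/2\mathbb{Z}$), so the naive chain-level argument used for $G_n$ is unavailable. The whole argument hinges on the orientability of the generating manifold built into Definition \ref{def:1}; in the write-up I would therefore verify carefully that this hypothesis is genuinely in force, and that the step-by-step application and telescoping of Proposition \ref{prop:2} is valid across the entire iteration.
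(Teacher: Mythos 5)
Your proposal is correct and follows essentially the same route as the paper: the paper's proof is the one-line observation that, via Proposition \ref{prop:2}, the contributions to $G_{n-1}$ and $G_n$ are $H_{\dim S-1}(S)$ and $H_{\dim S}(S)$ for the closed, connected, orientable generating manifolds $S$, and that these groups are free. You merely supply the standard justification (telescoping the direct-sum decompositions and deducing freeness of $H_{\dim S-1}(S;R)$ from Poincar\'e duality and the universal coefficient theorem) that the paper leaves implicit.
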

\begin{proof}
$S$ is closed, connected and orientable and two groups $H_{\dim S-1}(S;\mathbb{Z})$ and $H_{\dim S}(S;\mathbb{Z})$ are free. This leads us to the desired results. 
\end{proof}

\begin{Prop}[\cite{kitazawa5}]  
\label{prop:3}
Let $R$ be a principal ideal domain. 
For any integer $0 \leq j \leq n$, we define $G_j$ as a free finitely generated $R$-module so
  that $G_0$ is a trivial $R$-module, that $G_n$ is not
 a trivial $R$-module and that the relation ${\Sigma}_{k=1}^{n-1}{{\rm rank}_R G_k} \leq {\rm rank}_R G_n$ holds. Then, by a finite iteration
 of normal M-bubbling {\rm (}S-bubbling{\rm )} operations starting from $f$, we obtain a fold map $f^{\prime}$ and $H_j(W_{{f}^{\prime}};R)$ is isomorphic to $H_j(W_f;R) \oplus G_j$. 
\end{Prop}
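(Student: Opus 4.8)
The plan is to realize the prescribed family $\{G_j\}_{j=0}^{n}$ by a carefully chosen finite sequence of trivial normal M-bubbling operations, using Proposition \ref{prop:2} to track the effect of each operation on the homology of the Reeb space. The central observation is that a single normal M-bubbling operation whose generating manifold $S$ has dimension $d=\dim S$ contributes a summand $H_{i-(n-d)}(S;R)$ in degree $i$ for $0\le i<n$, together with an extra copy of $R$ in the top degree $n$. So to build up a free summand of rank $r_k:={\rm rank}_R G_k$ in each intermediate degree $1\le k\le n-1$, the idea is to choose generating manifolds $S$ that are standard spheres: taking $S\cong S^{n-k}$ (so that $d=n-k$ and $n-d=k$) produces, by Proposition \ref{prop:2}, a summand $H_{i-k}(S^{n-k};R)$, which is $R$ exactly when $i=k$ and when $i=n$, and is trivial otherwise. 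Thus one such operation adds one copy of $R$ to degree $k$ and one copy of $R$ to degree $n$, and nothing elsewhere. I would first verify that Example \ref{ex:3} permits these operations: embedding $S^{n-k}$ in a small open disc of the regular value set $O$, with a connected component $P$ of its inverse image on which $f$ restricts to a trivial bundle, so that a trivial normal M-bubbling (in fact S-bubbling) operation along it is available.

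Next I would iterate. For each $k$ from $1$ to $n-1$, perform exactly $r_k$ operations with generating manifold $S^{n-k}$; by additivity of Proposition \ref{prop:2} across successive operations (each operation affects only the newly created part of the Reeb space, leaving the previously accumulated summands intact), after all of these the homology in degree $k$ has gained $G_k\cong R^{r_k}$ for each intermediate $k$, the homology in degree $0$ is unchanged (consistent with $G_0$ trivial, since $S^{n-k}$ is connected and $n-k\ge 1$), and the top degree $n$ has accumulated $\sum_{k=1}^{n-1} r_k$ copies of $R$. The running total contributed to degree $n$ so far is therefore $\sum_{k=1}^{n-1} r_k={\rm rank}_R G_{n-1}+\cdots$, which by hypothesis is at most $r_n={\rm rank}_R G_n$. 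This is precisely where the inequality $\sum_{k=1}^{n-1}{\rm rank}_R G_k\le {\rm rank}_R G_n$ is used and why it is needed.

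To finish, I would make up the deficit in the top degree. Let $s:=r_n-\sum_{k=1}^{n-1} r_k\ge 0$; performing $s$ additional operations whose generating manifold is a point $S\cong S^0$ restricted to a single point, i.e. ordinary Kobayashi-type bubbling operations with $d=0$, each adds by Proposition \ref{prop:2} a summand $H_{i-n}(\mathrm{pt};R)$ in degrees $i<n$ (which is trivial for all such $i$) plus one copy of $R$ in degree $n$. Hence these $s$ operations raise the top-degree rank to exactly $r_n$ without disturbing any intermediate or degree-$0$ homology. After all operations, $H_j(W_{f'};R)\cong H_j(W_f;R)\oplus G_j$ holds in every degree, as required. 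The main obstacle I expect is not the homology bookkeeping, which is essentially linear, but the geometric realizability at each stage: one must guarantee that at every step there remains a regular value component $O$ containing a small open disc admitting a standard sphere $S$ of the required dimension with a connected inverse-image component on which $f$ restricts to a trivial bundle, so that Example \ref{ex:3} applies and the operations can genuinely be carried out in sequence. Checking that successive operations do not destroy this availability, and that the bundle-triviality condition can be maintained, is the delicate part.
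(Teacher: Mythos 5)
Your proposal is correct and matches the paper's own proof essentially verbatim: the paper likewise takes ${\rm rank}_R G_j$ copies of $S^{n-j}$ for $1\le j\le n-1$ plus ${\rm rank}_R G_n-\sum_{k=1}^{n-1}{\rm rank}_R G_k$ points, placed in open discs of $W_f-q_f(S(f))$, and performs trivial normal S-bubbling operations along them one after another, tracking the homology via Proposition \ref{prop:2}. The geometric-realizability concern you flag is handled in the paper only by the parenthetical remark that the generating manifolds be taken in open discs in the regular part, which is exactly the setting of Example \ref{ex:3}.
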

\begin{proof}
We can choose a family of standard spheres and points in $W_f-q_f(S(f))$ satisfying the following.
\begin{enumerate}
\item The family includes just ${\rm rank}_R G_j$ copies of $S^{n-j}$ for $1 \leq j \leq n-1$.
\item The family includes just ${\rm rank}_R G_n- {\Sigma}_{k=1}^{n-1}{{\rm rank}_R G_k}$.
\end{enumerate}
For the family of the spheres and the points, we can perform trivial normal S-bubbling operations whose generating manifolds are the chosen spheres or points one after another (we must take the points and spheres in open discs in $W_f-q_f(S(f))$ for example). Thus we have a desired map.
\end{proof}
Example \ref{ex:1} and Fact \ref{fact:2} account for the case $G_j=\{0\}$ ($0 \leq j \leq n-1$) of Proposition \ref{prop:3}.

The following is fundamental and important in knowing topological information of the source manifolds of explicit fold maps in the present paper. However, application of this is left to readers throughout this paper.

\label{prop:4}
\begin{Prop}[\cite{kitazawa}, \cite{kitazawa2}, \cite{kitazawa3}, \cite{saeki2}, \cite{saekisuzuoka} etc.]
\label{prop:4}
For a stable fold map on a closed and connected manifold of dimension $m$ into an $n$-dimensional manifold with no boundary obtained by a finite iteration of normal S-bubbling operations starting from a special generic map, let the relation $k=m-n>1$ hold. Then the quotient map onto the Reeb space induces isomorphisms of homology and homotopy groups of degree $l<k$.

As a specific case, if the map is special generic, then the quotient map onto the Reeb space induces isomorphisms of homology and homotopy groups of degree $l \leq k$ of the source manifold and those of the Reeb space.  
\end{Prop}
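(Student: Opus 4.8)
The plan is to show that the quotient map $q_f \colon M \to W_f$ is \emph{$k$-connected}, that is, that it induces isomorphisms on $\pi_l$ and $H_l$ for $l<k$ (and, in the special generic case, for $l\le k$). The single geometric input I would extract first is a description of the point-preimages of $q_f$: by definition $q_f^{-1}(w)$ is one connected component of a fibre $f^{-1}(y)$, and for the maps under consideration, obtained by S-bubbling operations from a special generic map so that inverse images of regular values are disjoint unions of almost-spheres, every such component is either a single point, over the images of the definite folds, or an almost-sphere $\Sigma^{k}$ homeomorphic to $S^{k}$. Since $k>1$, each point-preimage is therefore $(k-1)$-connected, equivalently $(k-1)$-acyclic with reduced coefficients.

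For homology I would invoke the Vietoris--Begle mapping theorem (or equivalently the Leray spectral sequence of the proper surjection $q_f$): the relevant stalks measure the reduced homology of the point-preimages, which vanishes in degrees below $k$, so $q_{f*}\colon H_l(M;R)\to H_l(W_f;R)$ is an isomorphism for $l<k$ and a surjection for $l=k$, for any coefficient ring $R$, since almost-spheres are acyclic below degree $k$ over every $R$.

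For homotopy the difficulty is that $q_f$ is not a fibration, so there is no long exact sequence to read off directly; I would instead pass through homology. First I would settle $\pi_1$: over the regular value set $\bar f$ restricts to a genuine fibre bundle whose fibre is the simply-connected manifold $\Sigma^{k}$ (here $k\ge 2$ is essential), while the degenerate strata sit in positive codimension and only cap fibres off, so a van Kampen argument over a suitable cover of $W_f$ gives $\pi_1(M)\cong\pi_1(W_f)$. Granting this, the mapping cylinder pair $(W_f,M)$ is simply connected, and the homology computation above yields $H_l(W_f,M)=0$ for $l\le k$; by the relative Hurewicz theorem $\pi_l(W_f,M)=0$ for $l\le k$, hence $q_f$ induces isomorphisms on $\pi_l$ for $l<k$, as claimed.

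The sharper statement in the special generic case, which I would also use as the base of an induction on the number of S-bubbling operations, I would treat by a cleaner model. By Fact \ref{fact:1} the source $M$ is the boundary of the total space $E$ of a linear $D^{k+1}$-bundle over $P=W_f$, and $E$ deformation retracts onto its zero section $P$, with the composite $\partial E=M\hookrightarrow E\to P$ homotopic to $q_f$. Since $E$ is a compact $(m+1)$-manifold collapsing onto the $n$-complex $P$, it is built from $\partial E$ by attaching handles of index at least $(m+1)-n=k+1$, so $(E,\partial E)$ is $k$-connected; Lefschetz duality, with twisted coefficients if $E$ is non-orientable, together with $H^{j}(E)\cong H^{j}(P)=0$ for $j>n$ and, crucially, $H^{n}(P)=0$, which holds because a special generic map has non-empty singular set and hence $\partial P\neq\emptyset$, gives $H_l(E,\partial E)=0$ even for $l=k+1$. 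Relative Hurewicz then upgrades the conclusion to isomorphisms for all $l\le k$. For the general case one checks that a single S-bubbling operation alters $M$ and $W_f$ only inside the bundle neighbourhood analysed in the sketch of Proposition \ref{prop:2}, replacing one $k$-connected model by another and gluing along a $k$-connected sphere-bundle piece, so that $k$-connectivity of $q_f$ is preserved. The main obstacle throughout is exactly this homotopy-theoretic step: the failure of $q_f$ to be a fibration forces the separate control of $\pi_1$ and the relative Hurewicz detour, rather than a direct exact-sequence argument.
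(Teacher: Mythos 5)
The paper does not actually prove Proposition \ref{prop:4}: it is quoted from the cited literature (\cite{saeki2}, \cite{saekisuzuoka}, etc.), and the text only remarks that Example \ref{ex:1} and Fact \ref{fact:2} illustrate it. Your overall strategy --- classify the point-preimages of $q_f$, deduce that the stalks are acyclic below degree $k$, get the homology statement from Vietoris--Begle/Leray, and upgrade to homotopy via a $\pi_1$ comparison and relative Hurewicz --- is exactly the strategy of those sources, so the plan is sound. But two of your concrete assertions are false as stated. First, the classification of point-preimages is incomplete: an S-bubbling operation introduces \emph{indefinite} fold points along $\partial N(S)$, and the component of a singular fibre over such a value is neither a point nor an almost-sphere; it is the regular fibre with the connect-sum $(k-1)$-sphere collapsed, i.e.\ a one-point union $\Sigma_1\vee\Sigma_2$ of homotopy $k$-spheres. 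The Vietoris--Begle step survives because such a wedge is still $(k-1)$-connected for $k>1$, but the hypothesis of the mapping theorem cannot be checked from your stated premise. Second, Fact \ref{fact:1} does \emph{not} give $M=\partial E$ for a linear $D^{k+1}$-bundle $E$ over $P$: the piece over $P$ is only an $S^{m-n}$-bundle, with no linearity assumed and no reason to bound a disc bundle. Your clean model for the special generic case therefore covers only a subclass; the correct route is to run the Mayer--Vietoris/van Kampen comparison directly on the two-piece decomposition of Fact \ref{fact:1} (the $S^{k}$-bundle over $P$ projects $k$-connectedly, the $D^{k+1}$-bundle over $\partial P$ maps by a homotopy equivalence onto a collar), or simply to cite \cite{saeki2} for the sharper $l\le k$ statement.

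A further step that needs more than the sentence you give it is the relative Hurewicz argument when $\pi_1(M)\cong\pi_1(W_f)$ is nontrivial: knowing that the mapping cylinder pair is $1$-connected is not enough, and you must establish $H_l(W_f,M)=0$ for $l\le k$ with $\mathbb{Z}[\pi_1]$-coefficients, equivalently in the universal covers. This does follow from the same stalk computation because the point-preimages are simply connected, so every local system restricts trivially to them, but it has to be said; likewise your Lefschetz-duality computation of $H_{k+1}(E,\partial E)$ must be done with these coefficients, where the vanishing rests on every component of the universal cover of $P$ meeting the boundary, not merely on $H^n(P)=0$. None of these gaps is fatal --- each closes by standard arguments --- but each is a point where the proof as written asserts something either false in general (the fibre classification, the $\partial E$ model) or unproved in the generality required (the local-coefficient input to Hurewicz).
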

Maps in Example \ref{ex:1} or \ref{ex:2} and Fact \ref{fact:2} explain Proposition \ref{prop:4} well.
This holds for more general situations. See the cited articles,  \cite{kitazawa5} and \cite{kitazawa6}. 

\section{Main results and their proofs}
\label{sec:3}

A {\it homology group} of a topological space means a homology group of the space of a fixed degree.
\begin{Def}
In Problem in the introduction, we can naturally define the pair $(f,f^{\prime})$ of maps and the family of groups $\{G_j\}_{j=0}^{n}$. We call the pair $((f,f^{\prime}),\{G_j\})$ a {\it normal bubbling pair} and $\{G_j\}$ or a sequence $\{{G^{\prime}}_{j}\}_{j=0}^{n}$ of groups such that each pair $(G_j,{G^{\prime}}_j)$ of the corresponding groups are mutually isomorphic is said to be a sequence of groups {\it realized in a normal bubbling pair} or {\it R-NBP}. 
\end{Def}
Note that for the family of normal bubbling operations to get the map $f^{\prime}$ from $f$, it is sufficient to consider only normal M(S)-bubbling operations.  
\begin{Cor}
\label{cor:2}
Consider a normal bubbling pair $((f,f^{\prime}),\{G_j\}_{j=0}^{n})$. Let $S$ be a generating manifold such that the dimension is maximal among all the generating manifolds of the normal bubbling operations.
Thus, the minimal number $j=j_0$ such that $G_j$ is not trivial is $n-\dim S$, the group $G_{n-\dim S}$ is free and the relation ${\rm rank}_{\mathbb{Z}} G_{n-\dim S} \leq {\rm rank}_{\mathbb{Z}} G_{n}$ holds.
\end{Cor}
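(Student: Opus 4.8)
The plan is to reduce everything to the single-operation homology formula of Proposition \ref{prop:2} and then read off the three claims from the degrees at which the generating manifolds contribute. By the remark preceding the statement, for the purpose of tracking the family $\{G_j\}$ I may assume the normal bubbling pair is realized by a finite sequence of normal M(S)-bubbling operations, say with generating manifolds $S_1,\dots,S_T$ of dimensions $d_1,\dots,d_T$, each $S_t$ lying in a regular-value component of the map produced by the previous operations. Applying Proposition \ref{prop:2} to each operation in turn---with the Reeb space of the intermediate map playing the role of $W_f$---I would obtain, by induction on $T$, that for every principal ideal domain $R$
$$H_i(W_{f^{\prime}};R)\cong H_i(W_f;R)\oplus\bigoplus_{t=1}^{T}H_{i-(n-d_t)}(S_t;R)\quad(0\le i<n),$$
$$H_n(W_{f^{\prime}};R)\cong H_n(W_f;R)\oplus R^{T}.$$
Thus $G_j\cong\bigoplus_{t}H_{j-(n-d_t)}(S_t;R)$ for $j<n$ and $G_n\cong R^{T}$. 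Taking $R=\mathbb{Z}$ gives the natural integral family whose ranks appear in the statement.

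Next I would locate the minimal nonzero degree. Since each $S_t$ is a connected closed manifold of dimension $d_t$, the summand $H_{j-(n-d_t)}(S_t)$ vanishes unless $0\le j-(n-d_t)\le d_t$, i.e.\ unless $n-d_t\le j\le n$. Writing $d:=\dim S=\max_t d_t$, for $j<n-d$ every shifted degree $j-(n-d_t)=j-n+d_t$ is negative (because $d_t\le d$ forces $j<n-d\le n-d_t$), so $G_j=0$. At $j=n-d$ the shifted degree is $d_t-d$, which is negative when $d_t<d$ and equals $0$ exactly for the maximal-dimensional generating manifolds; each of the latter contributes $H_0(S_t)\cong\mathbb{Z}$ by connectedness. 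Hence $G_{n-d}\cong\mathbb{Z}^{a}$, where $a\ge 1$ is the number of generating manifolds of maximal dimension $d$. This simultaneously shows that $n-\dim S$ is the smallest index with $G_j\ne 0$ and that $G_{n-\dim S}$ is free.

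Finally, the rank inequality is immediate from this bookkeeping: $\mathrm{rank}_{\mathbb{Z}}G_{n-\dim S}=a$ is the number of maximal-dimensional generating manifolds, which is at most the total number $T$ of bubbling operations, whereas $\mathrm{rank}_{\mathbb{Z}}G_n=T$. I would also note $\dim S\le n-1$ (the tubular neighbourhood carries a $D^{n-\dim S}$-bundle, so $n-\dim S\ge 1$), which is consistent with the requirement $G_0=\{0\}$.

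The step I expect to require the most care is the inductive application of Proposition \ref{prop:2}: I must verify that the direct-sum decomposition of the Reeb-space homology persists through successive operations even when a later generating manifold is placed in a regular-value region created by an earlier operation, and that the summand contributed by each operation depends only on its own generating manifold. Once this clean additive formula is established, the determination of $j_0$, the freeness of $G_{j_0}$, and the rank inequality all follow directly from the connectedness of the generating manifolds and from the degree shift $n-\dim S_t$.
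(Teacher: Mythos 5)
Your proposal is correct and follows essentially the same route as the paper: iterate Proposition \ref{prop:2} over the sequence of M-bubbling operations, observe that an operation with generating manifold of dimension $d_t$ contributes $H_{j-(n-d_t)}(S_t)$ in degree $j<n$ and a copy of $R$ in degree $n$, so that the count of maximal-dimensional generating manifolds gives ${\rm rank}_{\mathbb{Z}}G_{n-\dim S}$ (a free group, by connectedness) while the total count gives ${\rm rank}_{\mathbb{Z}}G_n$, whence the inequality. Your write-up is more explicit than the paper's (which compresses this bookkeeping into a few lines and cites Corollary \ref{cor:1} for the freeness of $G_n$), but the underlying argument is the same.
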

\begin{proof}
The time of normal M-bubbling operations whose generating manifolds are of dimension $\dim S$ is ${\rm rank}_{\mathbb{Z}} G_{n-\dim S}$ from Proposition \ref{prop:2} and $n-\dim S$ is the minimal number $j=j_0$ such that $G_j$ is not trivial. Furthermore, the group must be free. ${\rm rank}_{\mathbb{Z}} G_{n}$ is the time of all normal M-bubbling operations and from Corollary \ref{cor:1}, the group $G_n$ must be free. 
\end{proof}
In Corollary \ref{cor:2}, we call $j_0$ the {\it effective minimum} of a normal bubbling pair $((f,f^{\prime}),\{G_j\}_{j=0}^{n})$. We also abuse this terminology for a sequence of finitely commutative groups of a finite length.

As a natural question and for construction of maps by the surgery operations, we will attack the following fundamental problem.
\begin{Prob*}
Investigate sufficient conditions for $\{G_j\}$.
\end{Prob*} 
Several sufficient conditions have been found in \cite{kitazawa5} and \cite{kitazawa6} including Proposition \ref{prop:3}. 

We will find new conditions in the present paper in a new way based on fundamental theory of sequences of numbers and calculus. 

Let $k>0$ be an integer. A sequence $\{a_j\}_{j=1}^{k}$ of real numbers is said to be {\it strictly increasing} if for any pair $(j_1,j_2)$ of integers satisfying the inequality $0 \leq j_1<j_2 \leq n$, the inequality $a_{j_1} < a_{j_2}$ holds.

\begin{Thm}
\label{thm:1}
For any sequence $\{G_j\}_{j=0}^{n}$ of free finitely generated commutative groups such that for the effective minimum $j_0$ the subsequence $\{{\rm rank}_{\mathbb{Z}} G_j\}_{j=j_0}^{n}$ is strictly increasing. Then the given sequence is R-NBP.
\end{Thm}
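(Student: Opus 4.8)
The plan is to reduce the statement to a purely additive bookkeeping problem governed by Proposition \ref{prop:2}, and then to solve that problem by an induction on the effective minimum using generating manifolds whose homology is as \emph{flat} as possible. First I would record the effect of a single normal M-bubbling operation in the language of Poincaré polynomials. By Proposition \ref{prop:2}, an operation whose generating manifold $S$ is a closed connected orientable submanifold of dimension $d$ (placed in an open disc of a regular region, as permitted by Example \ref{ex:3}) adds $H_{i-(n-d)}(S;\mathbb{Z})$ to $H_i(W;\mathbb{Z})$ for $i<n$ and a single $\mathbb{Z}$ to $H_n(W;\mathbb{Z})$. If $S$ has free homology, then in terms of the polynomial $Q(x)=\sum_{j=j_0}^{n}({\rm rank}_{\mathbb{Z}}G_j)x^j$ recording the desired increments, realizing $\{G_j\}$ by operations with generating manifolds $S_1,\dots,S_N$ is exactly the identity $Q(x)=\sum_{t=1}^{N}x^{\,n-\dim S_t}P_{S_t}(x)$, where $P_{S_t}$ is the (free) Poincaré polynomial of $S_t$. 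Since the top Betti number of a closed connected orientable manifold equals $1$, each summand contributes exactly $x^n$ at the top, so automatically $N={\rm rank}_{\mathbb{Z}}G_n$; this is consistent with Corollary \ref{cor:2}, and it reduces the theorem to decomposing $Q$ as such a nonnegative combination in which no summand produces a term below degree $j_0$ (equivalently, all $\dim S_t\le n-j_0$).

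Next I would set up the induction. The crucial observation is that if the lowest layer uses generating manifolds of dimension $M:=n-j_0$ each having all Betti numbers equal to $1$ (Poincaré polynomial $1+x+\cdots+x^{M}$), then $r_{j_0}:={\rm rank}_{\mathbb{Z}}G_{j_0}$ such operations contribute exactly $r_{j_0}$ to every degree from $j_0$ to $n$; subtracting this layer leaves the sequence $\{{\rm rank}_{\mathbb{Z}}G_j-r_{j_0}\}_{j=j_0+1}^{n}$, which is again strictly increasing, with positive top and effective minimum $j_0+1$. Iterating, the $i$-th layer uses $r_{j_0+i}-r_{j_0+i-1}\ (\ge 1)$ flat manifolds of dimension $M-i$, and the residual top demand is finally met by points; the operation counts telescope to ${\rm rank}_{\mathbb{Z}}G_n$ exactly. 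Thus strict monotonicity is precisely what guarantees that every residual layer stays strictly increasing and that all layer multiplicities are nonnegative integers.

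The hard part will be producing, in each relevant dimension and codimension, an honest generating manifold realizing the flat profile. For $d=2^{k}-1$ the product $S^1\times S^2\times S^4\times\cdots\times S^{2^{k-1}}$ has free homology, embeds in any positive codimension, and has Poincaré polynomial $1+x+\cdots+x^{d}$; more generally suitable products and connected sums of spheres realize flat profiles. However, closed orientable manifolds with all Betti numbers equal to $1$ do not exist in certain dimensions (for instance $d=2$, where orientability forces an even first Betti number), and embeddability in small codimension is not automatic, the tightest case being the top-dimensional generating manifold, of dimension $n-j_0$ and codimension $j_0$. I expect the bulk of the work, and the point where the elementary analysis of sequences and functions enters, to lie in circumventing these obstructed dimensions: one replaces an unavailable flat manifold by an admissible one (a torus or a product of two spheres, say) whose profile overshoots a neighboring degree, and absorbs the overshoot into the strict gaps $r_{j+1}-r_j\ge 1$, re-solving the resulting numerical constraints so that all multiplicities stay nonnegative.

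Once this bookkeeping is arranged, Example \ref{ex:3} legitimizes performing the corresponding trivial normal M-bubbling operations one after another in disjoint open discs of a fixed regular region, so that their homological effects simply add. The polynomial identity $Q(x)=\sum_t x^{\,n-\dim S_t}P_{S_t}(x)$ then yields the isomorphisms $H_j(W_{f'};\mathbb{Z})\cong H_j(W_f;\mathbb{Z})\oplus G_j$ for all $j$, exhibiting $((f,f'),\{G_j\})$ as a normal bubbling pair and the given sequence as R-NBP. I would close by remarking that the strict-increase hypothesis is genuinely weaker than the condition ${\Sigma}_{k=j_0}^{n-1}{\rm rank}_{\mathbb{Z}}G_k\le {\rm rank}_{\mathbb{Z}}G_n$ of Proposition \ref{prop:3}, since a single flat generating manifold charges many interior degrees while spending only one unit of the top rank ${\rm rank}_{\mathbb{Z}}G_n$.
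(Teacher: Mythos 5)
Your reduction to the polynomial identity $Q(x)=\sum_t x^{\,n-\dim S_t}P_{S_t}(x)$ is a clean reformulation of Proposition \ref{prop:2}, and the layered induction would indeed work \emph{if} you could supply, in every dimension $d\le n-j_0$, a closed connected orientable manifold with Poincar\'e polynomial $1+x+\cdots+x^{d}$ embeddable in an open disc of the regular region. That is exactly where the argument breaks. Such ``flat'' manifolds do not exist for most $d$: for $d=2$ orientability forces $b_1$ to be even, and more generally for $d\equiv 2\pmod 4$ the middle Betti number is even because the intersection pairing there is nondegenerate and skew-symmetric. You acknowledge the obstruction but do not repair it, and the proposed repair --- replace a missing flat manifold by a torus or a sphere product and ``absorb the overshoot into the strict gaps $r_{j+1}-r_j\ge 1$'' --- does not go through as stated: the overshoot in a given degree is proportional to the \emph{multiplicity} of the offending layer, which can be arbitrarily large, whereas strict monotonicity only guarantees gaps of size $1$, so the residual demands can be driven negative. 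Since this substitution step carries the whole weight of the theorem in the obstructed dimensions, the proof is incomplete.

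The paper avoids the difficulty by never asking for flat profiles. Its bottom-layer generating manifold is a connected sum of ${\rm rank}_{\mathbb{Z}}G_j$ copies of $S^{j-j_0}\times S^{n-j}$ taken over the lower half of the range $j_0<j<j_0+\frac{n-j_0}{2}$ (with an adjustment at the middle degree when $n-j_0$ is even). Such connected sums exist and embed in codimension one in every dimension; their Poincar\'e polynomials are symmetric, so the lower-half coefficients can be prescribed at will, while the forced mirror contributions to the upper half are dominated by the demands there precisely because the sequence is strictly increasing, and the residual upper-half sequence is again strictly increasing. Induction on the effective minimum, finishing with points, completes the argument. If you want to salvage your layered scheme you would have to either restrict to dimensions of the form $2^k-1$ (where $S^1\times S^2\times S^4\times\cdots\times S^{2^{k-1}}$ really is flat) or switch, as the paper does, to Poincar\'e-dual-symmetric profiles rather than flat ones.
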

\begin{proof}
Let $f:M \rightarrow N$ be a fold map from a closed and connected manifold of dimension $m$ into an manifold without boundary of dimension $n$ satisfying the relation $m>n \geq 1$. We choose a family of generating manifolds consisting of the following manifolds in an open ball in a connected component of $W_f-q_f(S(f))$.
\begin{enumerate}
\item A manifold represented as a connected sum of ${\rm rank}_{\mathbb{Z}} G_j$ copies of $S^{j-j_0} \times S^{n-j}$ for $j_0<j<j_0+\frac{n-j_0}{2}$ if $n-j_0$ is odd and a manifold represented as a connected sum of ${\rm rank}_{\mathbb{Z}} G_j$ copies of $S^{j-j_0} \times S^{n-j}$ for $j_0<j<j_0+\frac{n-j_0}{2}$ and $l$ copies of $S^{j-j_0} \times S^{n-j}$ for $j=j_0+\frac{n-j_0}{2}$ where $l$ is the largest integer satisfying the inequality $2l \leq {\rm rank}_{\mathbb{Z}} G_{j_0++\frac{n-j_0}{2}}$ if $n-j_0$ is even and positive.
\item ${\rm rank}_{\mathbb{Z}} G_n - 1$.
\end{enumerate}
By this, we can reduce the situation to a simpler similar situation. Proposition \ref{prop:2} implies that it is equivalent to consider the sequence $\{{{G}^{\prime}}_j\}_{j=0}^{n}$ of finitely generated commutative groups satisfying the following: we add several explanations to warrant this argument.
\begin{enumerate}
\item The effective minimum ${j_0}^{\prime}$ is $j_0+\frac{n-j_0+1}{2}$ if $n-j_0$ is odd and ${G^{\prime}}_{{j_0}^{\prime}}$ is isomorphic to ${\mathbb{Z}}^{{\rm rank}_{\mathbb{Z}} G_{j_0+\frac{n-j_0+1}{2}}-{\rm rank}_{\mathbb{Z}} G_{j_0+\frac{n-j_0-1}{2}}}$ and not trivial by the assumption that the subsequence $\{{\rm rank}_{\mathbb{Z}} G_j\}_{j=j_0}^{n}$ is strictly increasing.
\item The effective minimum ${j_0}^{\prime}$ is $j_0+\frac{n-j_0}{2}$ or $j_0+\frac{n-j_0}{2}+1$ if $n-j_0$ is even and positive. Moreover, in this case, the value is as the former and ${G^{\prime}}_{{j_0}^{\prime}}$ is isomorphic to $\mathbb{Z}$ if ${\rm rank}_{\mathbb{Z}} G_{j_0++\frac{n-j_0}{2}}$ is odd and the value is as the latter if the number is even. Moreover, in the latter case, ${G^{\prime}}_{{j_0}^{\prime}}$ is isomorphic to ${\mathbb{Z}}^{{\rm rank}_{\mathbb{Z}} G_{j_0+\frac{n-j_0}{2}+1}-{\rm rank}_{\mathbb{Z}} G_{j_0+\frac{n-j_0}{2}-1}}$ and of rank  $>1$ by the assumption that the subsequence $\{{\rm rank}_{\mathbb{Z}} G_j\}_{j=j_0}^{n}$ is strictly increasing.
\item $\{{\rm rank}_{\mathbb{Z}} {G^{\prime}}_{j}\}_{j={j_0}^{\prime}}^{n}$ is strictly increasing by the assumption on $\{{\rm rank}_{\mathbb{Z}}  G_j\}_{j=j_0}^{n}$.
\end{enumerate}
By an induction, we reduce the case to the case where the effective minimum is $n$. In this case, it is sufficient to take points as the generating manifolds. This completes the proof.
\end{proof}
\begin{Rem}
\label{rem:1}
We can weaken the assumption of Theorem \ref{thm:1}. We show examples of such cases. Readers can check that Theorem \ref{thm:1} is true for these cases.   
\begin{enumerate}
\item Let $n-j_0$ be odd. Let the subsequence $\{{\rm rank}_{\mathbb{Z}} G_j\}_{j=j_0+\frac{n-j_0-1}{2}}^{n}$ be strictly increasing. As a weaker assumption, we assume that the inequality ${\rm rank}_{\mathbb{Z}} G_{j+1} -{\rm rank}_{\mathbb{Z}} G_j \geq 0$ or the two inequalities ${\rm rank}_{\mathbb{Z}} G_{j+1} -{\rm rank}_{\mathbb{Z}} G_j <0$ and ${\rm rank}_{\mathbb{Z}} G_{j} -{\rm rank}_{\mathbb{Z}} G_{j+1} <{\rm rank}_{\mathbb{Z}} G_{n-(j-j_0)} -{\rm rank}_{\mathbb{Z}} G_{n-(j-j_0+1)}$ hold for $j_0 \leq j <j_0+\frac{n-j_0-1}{2}$.
\item Let $n-j_0$ be even. Let the subsequence $\{{\rm rank}_{\mathbb{Z}} G_j\}_{j=j_0+\frac{n-j_0}{2}-1}^{n}$ be strictly increasing. As a weaker assumption, we assume that the inequality ${\rm rank}_{\mathbb{Z}} G_{j+1} -{\rm rank}_{\mathbb{Z}} G_j \geq 0$ or the two inequalities ${\rm rank}_{\mathbb{Z}} G_{j+1} -{\rm rank}_{\mathbb{Z}} G_j <0$ and ${\rm rank}_{\mathbb{Z}} G_{j} -{\rm rank}_{\mathbb{Z}} G_{j+1} <{\rm rank}_{\mathbb{Z}} G_{n-(j-j_0)} -{\rm rank}_{\mathbb{Z}} G_{n-(j-j_0+1)}$ hold for $j_0 \leq j <j_0+\frac{n-j_0}{2}-1$.
\end{enumerate} 
\end{Rem}
\begin{Thm}
\label{thm:2}
Let $c$ be a differentiable function on $(0,+\infty)$ such
 that ${\lim}_{x \to +\infty} c^{\prime}(x)=+ \infty$ holds where $c^{\prime}$ is the derivative of $c$.
We define an sequence $\{G_j\}_{j=0}^{n}$ of free finitely generated commutative groups satisfying the following.
\begin{enumerate}
\item ${\rm rank}_{\mathbb{Z}} G_0=0$
\item ${\rm rank}_{\mathbb{Z}} G_j=0$ if $c(j)<0$.
\item ${\rm rank}_{\mathbb{Z}} G_j$ is the largest integer not larger than $c(j)$ if $c(j) \geq 0$.
\end{enumerate}
If $n$ is sufficiently large, then the given sequence is R-NBP.
\end{Thm}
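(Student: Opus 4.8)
The plan is to deduce the statement from the weakened form of Theorem \ref{thm:1} recorded in Remark \ref{rem:1}, by checking that the sequence $\{G_j\}$ produced by $c$ satisfies those hypotheses once $n$ is large. Write $r_j:={\rm rank}_{\mathbb{Z}} G_j$, so that $r_j=\max\{0,\lfloor c(j)\rfloor\}$, and let $j_0$ denote the effective minimum. Note that $j_0$ and the values $r_0,\dots,r_{j_0}$ are determined by $c$ alone and do not change as we enlarge $n$; since $c(n)\to+\infty$, the effective minimum exists for all large $n$.

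First I would extract the two consequences of $\lim_{x\to+\infty}c^{\prime}(x)=+\infty$ that drive the argument. By the mean value theorem $c(j+1)-c(j)=c^{\prime}(\xi_j)$ for some $\xi_j\in(j,j+1)$, and $\lfloor c(j+1)\rfloor-\lfloor c(j)\rfloor$ lies within $1$ of this difference; hence there is an integer $X_1$ with $c^{\prime}(x)>1$ for all $x\geq X_1$, so that $\{r_j\}_{j\geq X_1}$ is strictly increasing, and moreover $r_{j+1}-r_j\to+\infty$ as $j\to+\infty$. In particular every index at which $\{r_j\}$ fails to be nondecreasing lies in the bounded window $j_0\leq j<X_1$, and on this fixed finite range the largest drop $D:=\max_{j_0\leq j<X_1}(r_j-r_{j+1})$ is a fixed nonnegative integer independent of $n$.

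Next I would verify the hypotheses of Remark \ref{rem:1} for $n$ large, treating the parities of $n-j_0$ uniformly. When $n$ is large the cut point $j_0+(n-j_0-1)/2$ (respectively $j_0+(n-j_0)/2-1$) exceeds $X_1$, so the entire upper subsequence lies in the strictly increasing regime $j\geq X_1$ and the required strict monotonicity of the upper half is automatic. For a lower-half index $j$ with $r_{j+1}-r_j<0$ one necessarily has $j<X_1$, so the symmetric comparison index $n-(j-j_0)$ lies within the fixed distance $X_1-j_0$ of $n$; since gaps near the top grow without bound, the symmetric gap $r_{n-(j-j_0)}-r_{n-(j-j_0+1)}$ exceeds the fixed bound $D\geq r_j-r_{j+1}$ as soon as $n$ is large, and there are only finitely many such $j$ to control simultaneously. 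At all remaining lower-half indices $r_{j+1}-r_j\geq 0$ holds. Thus the weakened inequalities of Remark \ref{rem:1} are met, and the conclusion of Theorem \ref{thm:1} produces a normal bubbling pair realizing $\{G_j\}$, i.e. the sequence is R-NBP.

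The main obstacle is precisely the interaction between the fixed initial oscillation of $c$ and the symmetric condition of Remark \ref{rem:1}: one must confirm that all failures of monotonicity are confined to the bounded window $[0,X_1]$ and that their bounded sizes are dominated by the top-end gaps, which tend to infinity exactly because $c^{\prime}\to+\infty$. Making both the bound $D$ and the growth of the symmetric gaps rigorous through the floor function — and checking that the cut point overtakes $X_1$ for large $n$ so the upper half is genuinely strictly increasing — is the technical heart of the argument, and it is here that the hypothesis ``$n$ sufficiently large'' is used.
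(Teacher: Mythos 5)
Your proposal is correct and follows essentially the same route as the paper: the paper's own proof also just observes that $\lim_{x\to+\infty}c^{\prime}(x)=+\infty$ forces the hypotheses of Remark \ref{rem:1} to hold for all sufficiently large $n$ and then invokes Theorem \ref{thm:1}. You have merely spelled out, via the mean value theorem and the bounded window of non-monotonicity, the details the paper leaves implicit.
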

\begin{proof}
From the assumption ${\lim}_{x \to +\infty} c^{\prime}(x)=+ \infty$, $c(x)>0$ holds for any sufficiently large $x>0$. In addition, from this assumption and the definition of the sequence of groups, the assumption in Remark \ref{rem:1} holds for any sufficiently large $n$. From Theorem \ref{thm:1} with Remark \ref{rem:1}, we immediately have the result.
\end{proof}

\begin{Ex}
For $c$ in Theorem \ref{thm:2}, we can take a polynomial function of degree $>1$ such that the coefficient of the top degree is positive, an exponential function $c(x)=a^x$ for $a>1$, $c(x):={\log}_{a}(x)$ for $a>1$ etc..
\end{Ex}

Last, we remark on polynomial functions of degree $1$ related
 to Proposition \ref{prop:3} and the new results.
\begin{Rem}
For a polynomial function of degree $1$ such that the coefficient of the top degree is positive, we may not apply Theorem \ref{thm:2} but we may apply Theorem \ref{thm:1} with Remark \ref{rem:1} to obtain a similar result. 

Moreover, we can easily check that this produces cases where the assumption of \ref{prop:3} does not hold for example.
\end{Rem}

\end{document}